\documentclass[a4paper,abstracton]{scrartcl}
\usepackage[utf8]{inputenc}
\usepackage{amsfonts,amsthm,amssymb,amsmath}
\usepackage[inline]{enumitem}
\usepackage{color}
\usepackage{graphicx} 
\usepackage{authblk}

\newtheorem{thm}{Theorem}

\newtheorem{conj}[thm]{Conjecture}

\theoremstyle{remark}

\title{List distinguishing index of graphs}
\author{Jakub Kwaśny, Marcin Stawiski\footnote{ Corresponding author; stawiski@agh.edu.pl}
}

\affil{AGH University,\\ Faculty of Applied Mathematics, \protect\\al. Mickiewicza 30, 30-059 Krakow, Poland}

\begin{document}
\maketitle
\begin{abstract}
We say that an edge colouring \emph{breaks} an automorphism if some edge is mapped to an edge of a different colour. We say that the colouring is \emph{distinguishing} if it breaks every non-identity automorphism. We show that such colouring can be chosen from any set of lists associated to the edges of a graph $G$, whenever the size of each list is at least $\Delta-1$, where $\Delta$ is the maximum degree of $G$, apart from a few exceptions. This holds both for finite and infinite graphs. The bound is optimal for every $\Delta\ge 3$, and it is the same as in the non-list version.

 \bigskip\noindent \textbf{Keywords}:  infinite graphs, distinguishing index, list colourings, asymmetric colouring
\end{abstract}

\section{Introduction}

In 1977, Babai \cite{BAB} introduced a concept of \emph{distinguishing} vertex colourings, which are those preserved only by the identity automorphism. 
The minimum number of colours in a distinguishing vertex colouring of a graph $G$ is called the \emph{distinguishing number} of $G$, and it is denoted by $D(G)$. The analogous parameter for edge colourings, introduced in 2015 by Pil\'sniak and Kalinowski \cite{KP}, is called the \emph{distinguishing index} of $G$ and denoted by $D'(G)$.  These concepts lie on the borderland between graph theory and abstract algebra, as they naturally generalize to an arbitrary group action \cite{primitive}. Automorphism breaking also plays an important role in the quasipolynomial time algorithm of Babai \cite{babaiisomorphism} for the graph isomorphism problem.


In this paper, we study the list version of distinguishing edge colourings. For each edge $e\in E(G)$, let $L(e)$ be a set of colours available for that edge. We are asking for the minimum cardinal number $k$ such that for any set of lists of cardinality $k$ we can find a distinguishing edge colouring $c$ such that $c(e)\in L(e)$ for every edge $e\in E(G)$. We denote this minimum $k$ as $D'_l(G)$, and we call this parameter the list distinguishing index of $G$. Clearly, $D'_l(G) \geq D'(G)$ for any graph $G$. 

List colourings were introduced in 1980 by Erdős, Rubin, and Taylor \cite{erdos} for the problem of proper vertex colourings. There are known classes of graphs with arbitrary large difference between the chromatic number and the required size of lists. However, for proper edge colourings there is a famous List Colouring Conjecture \cite{bollobas-list-conjecture,haggkvist-list-conjecture} which states that any graph $G$ has a proper edge colouring from any lists of size $\chi'(G)$. 

The list variant of vertex distinguishing colourings was first studied by Ferrara, Flesch and Gethner \cite{ferrara1} in 2011, and they conjectured that the list-distinguishing number is the same as the distinguishing number for every finite graph. There are a few partial results towards this conjecture: for finite trees \cite{ferrara2}, finite interval graphs \cite{immel}, graphs with dihedral automorphism group \cite{ferrara1}, Cartesian product of two finite cliques \cite{furedi}, and Kneser graphs \cite{kneser}.

Motivated by the conjecture of Ferrara, Flesch and Gethner, and by the List Colouring Conjecture, we propose the following.
\begin{conj}\label{ourconjecture} Let $G$ be a connected, infinite or finite graph. Then $D'_l(G)=D'(G)$.
\end{conj}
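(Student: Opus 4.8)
\section*{A possible approach to Conjecture~\ref{ourconjecture}}

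Since $D'_l(G)\ge D'(G)$ always holds, the plan is to prove the reverse inequality: for a connected $G$ and any assignment of lists $L(e)$ of size $D'(G)$ to the edges, exhibit a distinguishing colouring $c$ with $c(e)\in L(e)$ everywhere. I would first dispose of the degenerate case $D'(G)\le 1$, where $G$ has no edge-moving automorphism and every choice works, and then split into the \emph{large} regime $D'(G)\ge 3$ and the \emph{small} regime $D'(G)=2$.

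In the regime $D'(G)\ge 3$ the idea is to bootstrap from the main result announced above, that a distinguishing colouring can be chosen from arbitrary lists of size $\Delta-1$ apart from finitely many exceptions. Whenever $D'(G)\ge\Delta-1$ --- which holds for the ``extremal'' graphs, such as stars and other spiders, complete and complete bipartite graphs and their relatives --- the given lists already have size $\ge\Delta-1$, so that theorem applies directly, and the exceptional graphs have bounded $\Delta$ and can be checked by hand. There remains the range $3\le D'(G)\le\Delta-2$. For it I would rerun the inductive ``orient $G$ and colour the edges leaving one vertex at a time'' scheme behind the $\Delta-1$ bound, but track precisely how much of the colour budget is actually consumed: the Kalinowski--Pil\'sniak-type arguments spend close to $\Delta-1$ colours only on a few recurring local configurations, and away from those one can get by with $D'(G)$ colours, whether or not the colours come from lists. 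Turning this into a clean ``list-defect'' refinement of the known upper bounds on $D'$ looks laborious but not conceptually hard.

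The genuine difficulty is the case $D'(G)=2$, where every edge carries a two-element list $L(e)=\{a_e,b_e\}$ and a choice is far more rigid than an unconstrained $2$-colouring. Here I would follow the architecture of the existing $D'(G)=2$ theorems --- the small-exception classifications in the finite case, and the motion, Local Lemma and Baire-category arguments of Lehner type in the infinite case --- but replace the convenient fixed $2$-colouring they build by a random choice $c(e)\in L(e)$, taken uniformly and independently over the edges. For a non-identity automorphism $\varphi$ and an edge $e$ it moves, the probability that $c(e)=c(\varphi(e))$ is at most $1/2$, so an automorphism moving many edges is left unbroken only with tiny probability; invoking the Lov\'asz Local Lemma in the locally finite case, or a measure- or category-theoretic argument in general, should then break every automorphism of sufficiently large motion, after which only finitely many small-motion automorphisms remain and can be handled by an explicit local correction. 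The obstruction here is real and, I expect, the crux: when $G$ is vertex-transitive with $D'(G)=2$, a distinguishing $2$-colouring must realise one particular edge-pattern up to symmetry, and adversarially chosen two-element lists might forbid every such pattern; excluding this --- and doing so uniformly for finite and for infinite, possibly uncountable, automorphism groups, where union bounds must give way to a transfinite construction --- is where I expect the finite and infinite cases to diverge and where most of the work will lie.
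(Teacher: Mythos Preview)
The first thing to flag is that Conjecture~\ref{ourconjecture} is \emph{not} proved in the paper: it is posed as an open problem, and the paper's contribution is the partial result $D'_l(G)\le\Delta-1$ (Theorems~\ref{thm:cycle} and~\ref{thm:trees}), together with the exceptional graphs handled in the final section. That settles the conjecture exactly when $D'(G)\ge\Delta-1$, and in particular for all connected subcubic graphs, but leaves the range $2\le D'(G)\le\Delta-2$ entirely open. So there is no ``paper's own proof'' to compare against; what you have written is a research programme for an open problem, and it should be assessed as such.

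With that in mind, the serious gap is in your middle regime $3\le D'(G)\le\Delta-2$. You propose to ``rerun the inductive scheme\dots\ but track precisely how much of the colour budget is actually consumed,'' asserting that away from a few local configurations one can get by with $D'(G)$ colours and that this ``looks laborious but not conceptually hard.'' This underestimates the difficulty badly. The greedy procedure in the paper colours, at a generic step, up to $\Delta-1$ forward edges of a vertex with \emph{pairwise distinct} colours; that is not an artefact of sloppy bookkeeping but the mechanism by which newly reached vertices become pointwise fixed. With lists of size $D'(G)\le\Delta-2$ there are simply not enough colours to do this, and there is no known substitute: $D'(G)$ is a global invariant that need not be witnessed locally, so a vertex-by-vertex scheme has no handle on it. Your claim that the Kalinowski--Pil\'sniak arguments spend close to $\Delta-1$ colours ``only on a few recurring local configurations'' is not correct --- they spend that many at essentially every branching vertex. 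This regime is precisely the open content of the conjecture, and nothing in your outline gives a way in. (A minor related point: the exceptional graphs are \emph{not} all of bounded $\Delta$ --- symmetric and bisymmetric trees occur for every $\Delta\ge3$ --- though the paper does treat them uniformly.)

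Your treatment of $D'(G)=2$ is more honest: you identify the probabilistic/LLL route, and you yourself name the obstruction --- adversarial two-element lists on a highly symmetric $G$ may forbid every distinguishing pattern --- without resolving it. That is fair as a sketch of where the problem lies, but it is not a proof and you should not present it as a step that merely needs writing out. In particular, the Local Lemma needs the ``automorphism survives'' events to have limited dependence, which fails dramatically when $\Aut(G)$ contains many automorphisms supported on overlapping edge sets; and for uncountable automorphism groups no union-bound or LLL argument is available at all. So as it stands your proposal reproduces what the paper already proves in the easy regime, and in the remaining regimes offers heuristics rather than arguments.
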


In the paper, we aim to provide a general upper bound for connected graphs, both finite and infinite. These types of bounds are known for the distinguishing index. For finite graphs, Pil\'sniak \cite{P} in 2017 proved the following.

\begin{thm}[\cite{P}] \label{thm: P} Let $G$ be a connected, finite graph that is neither a symmetric nor a bisymmetric
tree. If the maximum degree of $G$ is at least $3$, then
$D'(G) \le \Delta(G)-1$
unless $G$ is $K_4$ or $K_{3,3}$.
\end{thm}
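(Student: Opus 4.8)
The plan is to split into two cases according to whether $G$ is a tree or contains a cycle; in both, the colouring is built constructively, and the excluded graphs (symmetric and bisymmetric trees, $K_4$, $K_{3,3}$) appear precisely as the places where the construction runs out of colours.

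\emph{The tree case.} The centre of $G$ is a single vertex $v_0$ or a single edge, and it is fixed by every automorphism, so every automorphism permutes the rooted subtrees hanging at the centre. I would colour the edges from the leaves towards the centre, proving by induction the following dichotomy for a rooted tree $S$ whose root has at most $\Delta-1$ children (automatic for a proper subtree, since the parent edge is not counted): either $S$ admits at least $\Delta$ pairwise non-isomorphic distinguishing $(\Delta-1)$-edge-colourings, or $S$ is ``forced'', admitting a unique such colouring up to isomorphism. Given this, at an internal vertex $u$ one assigns to each child both a colour on the connecting edge and a distinguishing colouring of that child's subtree, so that isomorphic siblings receive different (edge-colour, coloured-subtree) pairs; with at most $\Delta-1$ children this is always possible, and it is also possible at the centre unless all of its children are forced and mutually isomorphic. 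Iterating the forced case identifies ``all $\Delta$ centre-subtrees forced and isomorphic'' with the symmetric trees and ``both sides of the central edge forced and isomorphic'' with the bisymmetric trees; as these are excluded, the colouring can be completed, so $D'(G)\le\Delta-1$.

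\emph{The cycle case.} I would reduce to $2$-connected graphs via the block--cut-vertex structure: automorphisms act on the block tree, and one colours blocks from the leaves of the block tree inward, synchronising colourings at cut vertices exactly as sibling subtrees were synchronised above (bridges behave like tree edges). For a $2$-connected $G$, fix a BFS spanning tree $T$ rooted at a vertex $r$; since $G$ is not a tree, it has non-tree edges and every edge lies on a cycle. Colour the edges of $G$ with $\le\Delta-1$ colours so that the only colour-preserving automorphism fixing $r$ is the identity, essentially by arranging that the coloured graph can be reconstructed layer by layer from $r$; this is where the edges beyond a spanning tree provide the room that a bare symmetric or bisymmetric tree lacks, removing the tree exceptions. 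It then remains to rule out colour-preserving automorphisms that move $r$: one arranges that the coloured graph seen from $r$ differs from the coloured graph seen from any other candidate vertex, so that no colour-preserving automorphism can move $r$ and the colouring is distinguishing. (Alternatively, one may start from a $\Delta$-edge-colouring, which exists for connected graphs with $\Delta\ge 3$, and show that one colour class can be eliminated away from the exceptional graphs.)

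The main obstacle, and the reason $K_4$ and $K_{3,3}$ must be excluded, is the highly symmetric small graphs: when $\Delta=3$ only two colours are available, and for an edge- and vertex-transitive graph on few vertices there may be no colour pattern that singles out a root $r$ and no slack in the subtree/block types to separate isomorphic branches, so both mechanisms can fail. I expect the technical core to be a finite case analysis showing that such an obstruction forces $G$ to be $K_4$ or $K_{3,3}$ — for each of which $D'=3=\Delta$ is verified directly — or a symmetric or bisymmetric tree, and that in every other case a suitable root, together with a consistent synchronisation at cut vertices and across non-tree edges, yields the desired $(\Delta-1)$-colouring.
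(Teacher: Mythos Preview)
The theorem you are attempting to prove is not proved in this paper at all: it is quoted, with attribution, from Pil\'sniak~\cite{P} as background, and the paper supplies no argument for it. In fact the paper \emph{uses} this theorem (together with the infinite analogue from~\cite{PS}) as a black box inside the proof of Theorem~\ref{thm:cycle}, to dispose of the degenerate situation in which every edge list is the same. So there is no ``paper's own proof'' of this statement to compare your proposal against.

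That said, a word on the proposal itself. The tree half is a reasonable outline and is close in spirit to how such results are usually established (and to how the paper handles the list version in Theorem~\ref{thm:trees}). The cycle half, however, is not yet a proof: the sentence ``colour the edges of $G$ with $\le\Delta-1$ colours so that the only colour-preserving automorphism fixing $r$ is the identity, essentially by arranging that the coloured graph can be reconstructed layer by layer from $r$'' is precisely the hard part, and you have not said how to do it, nor how to make the view from $r$ provably different from the view from every other vertex. The block--cut-tree reduction also hides real work: synchronising colourings of $2$-connected blocks at cut vertices is not the same as synchronising sibling subtrees in a tree, because blocks can have large internal symmetry that interacts with the colours already used on incident blocks. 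If you want to pursue this line you should look at Pil\'sniak's original argument in~\cite{P}; alternatively, note that the present paper claims its list-colouring machinery could be made self-contained (see the remark at the start of Section~2), which would give an independent proof of this theorem as the special case of identical lists.
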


Later, Pil\'sniak and Stawiski \cite{PS} proved the same claim for infinite graphs.

\begin{thm}[\cite{PS}] \label{thm: PS} Let $G$ be a connected, infinite graph with finite maximum degree $\Delta\ge 3$.
Then $D'(G) \le \Delta-1$.
\end{thm}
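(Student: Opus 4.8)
The plan is to build a distinguishing colouring by a single breadth-first sweep from a well chosen root, keeping a small amount of slack at every layer. Fix a vertex $v_0$ with $\deg v_0=\Delta$ (such a vertex exists as $\Delta<\infty$), and for $i\ge0$ let $L_i$ be the set of vertices at distance $i$ from $v_0$; local finiteness makes each $L_i$ finite. For $u\in L_{i+1}$ call its neighbours in $L_i$ the \emph{parents} of $u$. I colour the edges in blocks, the $i$-th block consisting of all edges between $L_{i-1}$ and $L_i$ together with all edges inside $L_i$, using only the colours $1,\dots,\Delta-1$, and I aim to maintain two properties. The first is \emph{layer rigidity}: within each $L_i$, no two vertices have both the same set of parents and the same colours on the corresponding parent edges. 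The second is \emph{anchoring}: all $\Delta$ edges at $v_0$ receive colour $1$, whereas every vertex of degree $\Delta$ other than $v_0$ is incident with an edge of colour $\neq1$, so that $v_0$ is the unique vertex of degree $\Delta$ whose incident edges are all of colour $1$. Granting both, the colouring is distinguishing: a colour-preserving automorphism $\varphi$ preserves degrees and hence, by anchoring, fixes $v_0$; therefore it preserves every $L_i$, and by induction on $i$ it fixes each $L_i$ pointwise, since a vertex of $L_{i+1}$ must be sent to a vertex with the same parent set (the parents lie in the already-fixed $L_i$) and the same parent-edge colours, and layer rigidity then forces it to be sent to itself. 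So $\varphi=\id$.

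Realising the sweep is routine away from the root. If $u\in L_{i+1}$ has at least two parents then it has at most $\Delta-1$ \emph{siblings} (vertices of $L_{i+1}$ with the same parent set), while the number of colour patterns available on its $\ge2$ parent edges is at least $(\Delta-1)^2$; since $\Delta\ge3$ this leaves room to give all the siblings pairwise distinct, non-constant patterns. If $u$ has a single parent $p$ and $p\neq v_0$, then $p\in L_i$ with $i\ge1$ has an edge toward a parent of its own, hence at most $\Delta-1$ neighbours in $L_{i+1}$, so the siblings with parent set $\{p\}$ can be handed the $\Delta-1$ distinct colours. The anchoring requirement for a high-degree vertex $w\neq v_0$ is then patched using edges that carry no rigidity burden: edges inside a layer, which are never used to pin a layer, or, if there is no such edge, one of the edges from $w$ to the next layer, whose colour one is free to preset to $2$. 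The one spot where slack genuinely disappears is $L_1$: the root has $\Delta$ children, all with parent set $\{v_0\}$, but only $\Delta-1$ colours, so two of them, say $u_1$ and $u_2$, inevitably receive the same colour on their edge to $v_0$ and violate layer rigidity at level $1$.

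Separating such a pair $u_1,u_2$ deeper inside $G$ — and, more generally, separating each colour class among the children of $v_0$ — is the crux, and the step I expect to be the main obstacle. The idea is to run the same type of construction on the parts of $G$ lying below $u_1$ and below $u_2$, colouring them to be rigid relative to their roots but with non-isomorphic colour patterns, which is possible because an infinite, connected, locally finite graph rooted at a vertex admits not just one but infinitely many pairwise non-isomorphic edge colourings with $\Delta-1\ge2$ colours that are rigid relative to the root; this is exactly the property that fails for the finite exceptions $K_4$, $K_{3,3}$ and the symmetric and bisymmetric trees. The delicate point is that when $G$ is not tree-like the regions below $u_1$ and below $u_2$ need not be disjoint — the $u_i$ may be adjacent or may share later neighbours — so this separation cannot be carried out in isolation but has to be woven into the layer-rigidity bookkeeping of the main sweep, where the obstruction eventually shows up and can be exploited; handling this interaction, together with the purely tree case, in which no non-tree edge is ever available and one relies solely on the infinite depth to break the symmetry of a rooted tree, is where the real work lies. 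Vertices of degree at most $2$ require no care, since they cannot be the image of $v_0$ and contribute nothing to layer rigidity.
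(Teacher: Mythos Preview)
The paper does not prove this theorem; it is quoted from \cite{PS} as background and used as a black box (in the proof of Theorem~\ref{thm:cycle}, the case where all lists coincide is disposed of by invoking Theorems~\ref{thm: P} and~\ref{thm: PS}). So there is no proof here to compare your proposal against.

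As to the sketch itself, you acknowledge that it is incomplete: the separation of the children of $v_0$ that share a colour on their edge to $v_0$ is, in your own words, ``where the real work lies,'' and you do not carry it out. There is also an internal inconsistency. Under your anchoring rule \emph{all} $\Delta$ edges at $v_0$ receive colour~$1$, so every pair of children of $v_0$ --- not just one pair $u_1,u_2$ --- has the same parent set $\{v_0\}$ and the same parent-edge colour, and layer rigidity fails entirely at $L_1$. If instead you colour the edges at $v_0$ with distinct colours (which is what the sentence ``two of them inevitably receive the same colour'' suggests), you lose the anchoring that pinned $v_0$ in the first place. Reconciling these two demands --- making $v_0$ recognisable while retaining enough freedom at $L_1$ --- is not a detail to be patched later; it is the heart of the argument, and the proposal as written does not resolve it.
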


We show that these two bounds also hold for the list version of the problem. Since the above two results are optimal, so is ours. In particular, it follows that $D'_l(G)=D'(G)$ for every subcubic connected graph.

The proof is divided into two parts. The first, major part contains a proof for graph with cycles, and then we separately check trees. In formulating the theorems, we exclude the same exceptional graphs as Pil\'sniak \cite{P}, so we describe them shortly in the last section.

\section{Graphs with a cycle}

From now on, we only consider edge colourings. In the proofs below, we skip the case where all the lists are identical, as this case follows from Theorems \ref{thm: P} and \ref{thm: PS}. However, we note that our approach would allow this case to be included, at the expense of complexity of the proofs. 

\begin{thm}\label{thm:cycle}
Let $G$ be a connected graph with maximum degree $\Delta \ge 3$ which is not a tree and not isomorphic to $K_{3,3}$, nor $K_4$. Then $D'_l(G)\leq \Delta-1$.
\end{thm}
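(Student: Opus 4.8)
The plan is to follow the breadth-first search (BFS) strategy underlying the non-list proofs of Theorems~\ref{thm: P} and~\ref{thm: PS}, but to take every colour from the prescribed lists, controlling feasibility by short Hall-type arguments against the bound $\Delta-1$.

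\textbf{Reduction to pinning down a root.} Since $G$ is not a tree, fix a shortest cycle $C$ and a vertex $r\in V(C)$; root a BFS at $r$ and let $V_0=\{r\},V_1,V_2,\dots$ be the distance classes from $r$. For $v\in V_i$ call an edge $vw$ with $w\in V_{i+1}$ a \emph{forward edge of $v$}; every vertex other than $r$ has at least one neighbour in a lower layer, hence at most $\Delta-1$ forward edges. The key observation is: if a colouring $c$ satisfies (i) for every $v\neq r$ the forward edges of $v$ have pairwise distinct colours, and (ii) every $c$-preserving automorphism of $G$ fixes $r$, then $c$ is distinguishing. Indeed, a $c$-preserving automorphism $\varphi$ with $\varphi(r)=r$ is an isometry fixing $r$, so it preserves each layer $V_i$; if $\varphi$ fixes $V_i$ pointwise then for each $v\in V_i$ it permutes the forward edges of $v$, and by~(i) it fixes each of them, hence fixes every forward neighbour of $v$; as every vertex of $V_{i+1}$ is a forward neighbour of some vertex of $V_i$, $\varphi$ fixes $V_{i+1}$ pointwise. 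By induction $\varphi$ fixes every vertex, and since $G$ is simple this gives $\varphi=\id$. (In the infinite case no compactness is needed: only finitely many layers are used to fix any given vertex.)

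\textbf{Securing (i).} Each forward edge is incident to a unique vertex in the lower of its two layers, so the forward-edge sets of distinct vertices are pairwise disjoint. For a fixed $v\neq r$ we must choose pairwise distinct colours, from their lists, for its at most $\Delta-1$ forward edges; since each list has size at least $\Delta-1$, Hall's condition holds trivially and a system of distinct representatives exists, and these choices are made independently over all $v\neq r$. This leaves the edges incident to $r$, all edges lying inside a single layer, and a good deal of slack in the choices above, still available for step~(ii).

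\textbf{Securing (ii): the main obstacle.} This is the heart of the argument, and the only place the hypotheses ``$G$ has a cycle'' and ``$G\notin\{K_4,K_{3,3}\}$'' are used. The goal is to colour the remaining edges — and, if needed, re-select some forward edges subject to~(i) — so that $r$ carries a local colour pattern that no colour-preserving automorphism can displace. One distinguishes cases according to $\deg(r)$ and the structure of $C$ near $r$: if $\deg(r)\le\Delta-1$ there is a spare colour at $r$, so one can give the edges at $r$ (or a short initial segment of $C$) a distinctive asymmetric colouring that singles $r$ out; if $\deg(r)=\Delta$ one cannot make the edges at $r$ pairwise distinct, and instead exploits $C$, colouring its first one or two edges in each direction from $r$ to mark $r$ as the unique vertex with a prescribed local configuration, while adjusting the nearby forward-edge colours to stop that configuration from recurring elsewhere. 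Each such adjustment is realised from the lists by a further small SDR/greedy argument using $|L(e)|\ge\Delta-1\ge 2$, and one checks that $K_4$ and $K_{3,3}$ are exactly the graphs where this marking is impossible (just as in the non-list setting of Theorems~\ref{thm: P} and~\ref{thm: PS}); the choice of which vertex of which shortest cycle to take as $r$ is itself part of the argument.

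\textbf{Expected difficulty.} I expect essentially all the work to lie in securing~(ii): pinning down the root from lists of size only $\Delta-1$, uniformly for finite and infinite $G$, without spoiling~(i). In the non-list proofs this is done by explicit colourings near the root and the cycle; here each colour must come from a prescribed list, so the argument becomes a sequence of local feasibility checks, and the careful analysis around the exceptional graphs is what makes the proof long. Trees, excluded here, are treated separately, as announced in the introduction.
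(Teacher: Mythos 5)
Your overall architecture matches the paper's: colour the ``forward'' edges of every non-root vertex with pairwise distinct colours from their lists (possible because each such vertex has at most $\Delta-1$ forward edges and the lists have size $\Delta-1$), and then pin down a distinguished starting structure built on a cycle so that the layer-by-layer fixing argument propagates. That skeleton, and your layer argument for why (i) plus (ii) suffices, are sound. But the proof stops exactly where the difficulty begins: step (ii) is only described as a plan (``one distinguishes cases\dots one checks that $K_4$ and $K_{3,3}$ are exactly the graphs where this marking is impossible''), and this is where essentially all of the paper's content lies. Concretely, two things are missing. First, a vertex $r$ of degree $\Delta$ whose incident lists are all equal cannot be rainbow-coloured at all, so ``a distinctive asymmetric colouring that singles $r$ out'' is not available locally; the paper instead marks an entire subgraph $G_0$ (an induced cycle together with an attached path/ray and all incident edges) with a colour \emph{pink} that is chosen according to the global structure of the subgraphs $H_p$ of edges whose lists contain $p$ --- splitting into the case where some non-trivial $H_p$ contains a cycle and the case where every non-trivial $H_p$ is a forest --- and the all-lists-identical case is the only one delegated to the non-list theorems. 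Second, and more seriously, preventing the marking pattern from ``recurring elsewhere'' cannot be done by a local adjustment near $r$: the lists elsewhere in the graph may force the same pattern to appear, so the paper has to colour the graph by an iterative procedure that maintains global invariants (its conditions (A1)--(A2), the gadget mechanism, and the cycle colouring scheme with its conditions (C1)--(C3)) at every step, including re-colouring whole length-$k$ cycles at once when they are encountered. Your proposal contains no mechanism for this, and the claim that small SDR/greedy checks against $|L(e)|\ge\Delta-1$ suffice is precisely what needs proof. As it stands this is an outline of the correct strategy, not a proof of the theorem.
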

\begin{proof} Let $G=(V,E)$ be a connected graph and $\Delta=\Delta(G)$ be its maximum degree. Assume that $G$ is not a tree and $G\not\in \{K_{3,3}, K_4\}$. Let $L = \{L(e)\}_{e\in E}$ be a set of lists, each of size $\Delta-1$. Denote $L(u)=\bigcup_{uv\in E} L_{uv}$ for any $u\in V$. 

First, consider the case when $\Delta$ is infinite. Since $G$ is connected, then $G$ must have exactly $\Delta$ edges. Hence, we can pick a different colour for each edge to obtain a distinguishing colouring with $\Delta = \Delta-1$ colours. For the rest of the proof, we shall assume that $\Delta$ is finite.

For each colour $i\in \bigcup_{e\in E} L(e)$, we consider a subgraph $H_i$ induced by all the edges $e$, such that $i\in L(e)$. If $H_i=G$, then we call such a subgraph trivial (we shall also sometimes say that the colour $i$ is trivial). If every $H_i$ is trivial, then we have a standard non-list colouring, which exists by Theorems \ref{thm: P} and \ref{thm: PS} (we use the assumptions that $G$ is not a tree, so it is not a symmetric nor a bisymmetric tree, and that $G\not\in \{K_{3,3}, K_4\}$). Therefore, we can assume that not every $H_i$ is trivial. 

We shall describe a greedy algorithm which iteratively chooses the colours of the edges of $G$ from the respective lists. The algorithm starts by colouring some starting subgraph $G_0$. All the edges of $G_0$ are coloured at this step, and this colouring is distinguishing for $G_0$. We shall  guarantee in the further course that $G_0$ is coloured uniquely,  which will cause $G_0$ to be fixed. Then, the algorithm processes the remaining vertices, one by one, and fixes each new vertex it has reached, i.e. any vertex that is incident to a coloured edge. 

\vspace{4mm}
\noindent\textsc{I. The starting subgraph}
\vspace{2mm}

We consider the following cases to select a suitable starting subgraph. This choice also affects the later colouring strategy, when we must avoid the colour pattern used on the starting subgraph.

\textbf{Case 1.}  There exists a colour $p$ such that $H_p$ is non-trivial and it contains a cycle.
We shall call this colour pink.

Let $C$ be an induced cycle in $H_p$. Since $H_p$ is non-trivial, it must contain a vertex $v$, in the same connected component of $H_p$ as $C$, which has an incident edge $vw$ outside $H_p$ (note that $w$ may be in $H_p$). By the choice of $v$, there exists a shortest path $R$ from $v$ to $C$ ending in a vertex $u$ of $C$ (and $u$ must be the only common vertex of $R$ and $C$). In particular, it may be the case that $v$ lies on $C$, then $R$ is trivial and $u=v$. Denote by $u^+$ a neighbour of $u$ on $C$. We define our starting subgraph $G_0$ as the subgraph induced by all the edges incident to the vertices of $C$ and $R$. 

We now specify a distinguishing colouring of the starting subgraph. We colour all the edges of $C$ and $R$ except $uu^+$ pink (this is possible since $C$ and $R$ are contained in $H_p$, so these edges have the colour pink on their lists) and assign $uu^+$ a colour other than pink; we shall call this colour blue. 
Let us consider all possible extensions of the current colouring to $G$ and all possible automorphisms of these coloured graphs that stabilise $C \cup R$. If none of these automorphisms acts non-trivially on it, then we only need to choose the colours for the edges not in $C$ nor $R$. For each vertex in $C\cup R$, we assign different colours other than pink to these edges. This can be done since each such vertex except $v$ has at most $\Delta-2$ neighbours outside $C \cup R$, and the lists have size $\Delta-1$. The vertex $v$ may have one more neighbour outside $C \cup R$ but it has also one incident edge with $\Delta-1$ colours different from pink.

If, on the other hand, there exists such an automorphism, it interchanges $v$ and $u^+$ and we must break it at this moment. Since $uu^+$ is an edge, then either $u=v$ or $v$ has a neighbour on $C$ different from its successor on $R$. This means that $v$ must have two neighbours in $G_0$. In this case, we would like to choose the colours on the edges incident to $v$ and $u^+$ such that these two vertices receive different palettes. But in this case, $v$ has at most $\Delta-2$ neighbours outside $C\cup R$ and $L(vw)$ does not contain the colour pink, so we have two possibilities for the last edge $vw$ we colour, which result in two different palettes of $v$. For the other vertices on $C\cup R$, including $u^+$, we do not have such freedom, but we can just succeed. Therefore, we first choose the colours for the edges incident to vertices other than $v$ (following the rule that for each vertex we choose different colours other than pink on the incident edges), and then to $v$ such that the palettes of $u^+$ and $v$ are different. This way, we break all the automorphisms of $G_0$. 

\textbf{Case 2.} For every colour $p$, the graph $H_p$ is either trivial or  $H_p$ is a forest.

Consider any induced cycle $C$ in $G$. If any edge of $C$ contained only non-trivial colours on its list, then all the lists in $G$ would be identical, and we have already assumed that this is not the case. Therefore, each edge of $C$ has a colour $p$ in its list such that $H_p$ is a forest. For any non-trivial colour $p$ on the lists of $C$, we can consider the longest path $P$ contained both in $C$ and $H_p$. Each such path $P$ is contained in a maximal path, a maximal ray, or a double ray in $H_p$, which we denote by $R$. If it is possible, that $R$ is not entirely contained in $C$, then we choose $p$, $C$ and $P$ accordingly (in other words, first we consider only the colours $p$ that have the longest $P$'s, and then we choose, if there is one, the one with $R\neq P$). We define our starting subgraph $G_0$ as the subgraph induced by all the edges incident to the vertices of $R$ and $C$. 

Denote by $u$ and $v$ the end-vertices of $P$. 
Let $R'$ be a maximal subpath or a subray of $R$ ending with $u$ or $v$ (without loss of generality, let it be $u$).
If $R'\neq P$ then we call the edge of $R'-P$ incident with the cycle $C$ \emph{the gadget} of $P$.

We start with colouring all the edges of $R'$ pink. The colouring of the rest of the edges of $G_0$ depends on the number of edges in $C-P$. 

If $C-P$ contains at least two edges, then we choose different colours for the edges $uu^-$ and $vv^+$, where $u^-$ and $v^+$ are the neighbours of $u$ and $v$, respectively, in $C-P$. These colours are different from pink by the maximality of $P$. We shall refer to these colours as blue and green, respectively. Next, for each vertex of $R'$, we choose different colours other than pink for the edges outside $R'$ (this is possible for the same reason as in Case 1). Then, we perform the following scheme, which we write down separately as it will be used again later.

\emph{Cycle colouring scheme.} We take two passes on the cycle, each time considering the vertices $u_1,\dots,u_k$ consecutively. First, we choose the colours for the edges of the cycle. If the current edge has the colour pink on its list, then we choose pink, unless this is the last edge we are colouring and this would result in exactly two pink paths of length $|P|$ and only two non-pink edges on the cycle. In this case, we choose a colour other than pink. If the current edge does not have the colour pink, then we choose any colour, unless the previous $|P|$ edges are pink, in which case we disallow blue or green, whichever would create the pink path of length $|P|$ surrounded by blue and green. Subsequently, we do a second pass and colour all the other edges adjacent to the vertices of the cycle.  Take a vertex $u_i$. We consider a few cases:
\begin{itemize} 
\item If $u_iu_{i+1}$ is pink, then we choose different colours other than pink for all the uncoloured edges incident to $u_i$. It is possible, since there are at most $\Delta-2$ such edges, and each of them has $\Delta-2$ colours other than pink on its list. 
\item If $u_iu_{i+1}$ has a colour other than pink, and the colour pink does not appear on all the lists of the uncoloured incident edges, then we forbid both pink and the colour of $u_iu_{i+1}$ on the incident edges and again choose different colours. Moreover, if $u_i$ is an end-vertex of a pink path of length $|P|$ on the cycle, then we forbid also blue or green (whichever does not appear on the other side of this path) on all the currently coloured edges. To argue that we can succeed, we observe that if blue or green is present on the list of $u_i u_{i+1}$, then this colour cannot appear on any of the lists of the incident edges outside $C$. This is because we would choose this colour to be called pink at the beginning, as it would yield a gadget. Furthermore, either there is no pink at all on the lists of the edges incident to $u_i$ (so we have only two forbidden colours) or there is one list with pink and one without it (which gives us an additional colour to choose from). 
\item If $u_iu_{i+1}$ has a colour other than pink, and all the incident edges have pink on their lists, then again we choose different colours other than pink and the colour of $u_iu_{i+1}$ on the incident edges. Moreover, if $u_i$ is an end-vertex of a pink path of length $|P|$ on the cycle, and we are forced to use blue or green (whichever does not appear on the other side of this path) somewhere on the edge incident to $u_i$, then we put this colour on $u_i u_{i+1}$. This may create a copy of a pink path of length $|P|$ surrounded by blue and green, but it will cause no problem due to an absence of a gadget (and $P$ must have a gadget, since the path just created could have one). Note that we have used the rule, that if an edge on the cycle has a colour other than pink, then there is no pink on its list. There might be one exception to this rule, but it does not concern us because this exception does not occur at the end of the pink path of length $|P|$, but rather of $|P|-1$. 
\end{itemize} 

If $C-P$ contains only one edge $uv$ (it must contain at least one, since $H_p$ is a forest) and $u$ and $v$ have degree at least three in $G$, then we choose different colours of edges incident to $u$ and $v$ so that the palettes of $u$ and $v$ are different. We shall refer to the colour of $uv$ as blue. If $R'\neq P$, then $u$ has two adjacent pink edges and $u$ only one, so they are already distinguished. Otherwise, by the maximality of $R$, none of the edges incident to $u$ and $v$ outside $C$ has pink in its list, so there are at least one and at most $\Delta-2$ neighbours of each of these vertices outside $C$ and we can choose two different palettes. Then we choose the colours of the remaining edges, again like in the second pass of the Cycle colouring scheme.

If $C-P$ contains only one edge $uv$ and $d(u)=d(v)=2$, then we recolour the edge $uu^+$, where $u^+$ is a neighbour of $u$ on $C$ other than $v$, with a new colour different from pink. We shall refer to this colour as blue. We choose a colour other than pink and blue for the edge $uv$ and call it green. Then we choose the colours of the remaining edges, like in the second pass of the Cycle colouring scheme.

\vspace{2mm}

Depending on what the starting subgraph $G_0$ looks like and on the chosen colouring, we shall avoid the specific patterns during the remaining part of the algorithm. This will guarantee that $G_0$ is stabilised and, given the colouring of $G_0$, also fixed. 

Note that, in fact, there are only two types of starting subgraph: either an induced cycle with all incident edges, or an induced cycle with an attached path or ray, with all incident edges. In both cases, all the edges in $G_0$ not contained in the cycle, path nor ray are assigned a colour other than pink. Let $k$ be the length of the cycle. We shall use the name \emph{gadget} not only for the edge defined in Case 2, but also for the analogous edge in Case 1 (i.e. the one on the non-trivial path $R$, incident to a vertex of $C$). Moreover, we shall refer to the pink path on the cycle in $G_0$ as $P$, regardless of whether it was formed in Case 1 or Case 2.

We will also reuse the Cycle colouring scheme during the next part. In Case 2, the scheme started from some specific pre-coloured cycle, but we have never used the fact, what this initial colouring looked like. The main property of this scheme is that it will never produce another pink path of length $|P|$ surrounded by green and blue, with or without a gadget (depending on the existence of a gadget in $G_0$). Therefore, we shall use it, starting with some other initial colourings. 

\vspace{4mm}
\noindent\textsc{II. The iterative procedure}
\vspace{2mm}

We shall now iteratively extend the set of \emph{reached} vertices, i.e. the ones with a coloured incident edge, starting from $G_0$. We shall execute the procedure until there are no uncoloured edges left. Let $A$ be the set of the automorphisms which stabilise $G_0$ and preserve the partial colouring we defined so-far. After each execution of the procedure, we shall guarantee that the following conditions are satisfied:
\begin{enumerate}[label = (A\arabic*)]
    \item Each reached vertex is fixed pointwise with respect to $A$.
    \item If a vertex $v\notin V(G_0)$ has a pink incident edge, and it is the only coloured edge incident to $v$, then this edge is not contained in any cycle of length $k$.
\end{enumerate}

Note that these conditions are satisfied for the initial colouring of $G_0$. 

The procedure starts by taking a reached vertex $v$ with the smallest distance from $G_0$, which has an uncoloured edge. We shall call the already coloured edges of $v$ as back edges, the uncoloured edges to the reached vertices as horizontal edges and the remaining ones as forward edges. If none of the forward edges of $v$ appear in any induced cycle of length $k$, then we simply colour each forward edge of $v$ with a different colour, avoiding pink if possible, and then each horizontal edge with an arbitrary colour other than pink. This is possible since there are at most $\Delta-1$ of such edges, and it fixes pointwise each newly coloured vertices, so the conditions (A1) and (A2) are fulfilled. 

If there is an induced cycle of length $k$ containing a forward edge of $v$, then we first check the following conditions:
\begin{enumerate}[label = (C\arabic*)]
    \item Each forward edge of $v$ appears on a cycle of length $k$.
    \item All the lists of the forward edges of $v$ are the same, and each of them contains pink.
    \item There are $\Delta-1$ forward edges.
\end{enumerate}

If any of these condition is not satisfied, then we can colour the forward edges with different colours either without using pink (C2 or C3) or we can use pink on the edge which does not appear in such cycle (C1). If, however, all these conditions hold, our further actions shall depend on the structure of $G_0$. Let $C'$ be a cycle of length $k$ that contains a forward edge of $v$. If $C'$ contains also the unique back edge of $v$, then this edge is not pink by (A2) and $C'$ has two fixed vertices by (A1). Therefore, we just need to realise the cycle colouring scheme from Case 2 and then $C'$ will be fixed pointwise as long as $G_0$ is stabilized. There is only one exception: if $G_0$ is a cycle with all edges except one coloured pink, and the cycle colouring scheme produced an identical copy of $G_0$, then we change the colour of the blue edge to any other (including pink).

Assume now that $C'$ is a cycle of length $k$ that contains two forward edges of $v$. We must ensure that the colouring of $C'$ will be different from the one in $G_0$, otherwise $G_0$ will not be stabilized. Therefore, we will again colour the whole $C'$ with all incident edges at once, along with the edges incident to $v$. We colour the forward edges of $v$ which are not in $C'$ with different colours other than pink and blue. Then we colour one of the edges on $C'$ incident to $v$ pink, and the other one with any colour other than pink and blue. 

This last choice may be impossible if $\Delta=3$ and the lists of both forward edges consist of exactly pink and blue. In this case, we colour both edges pink and continue to choose pink in both directions on $C'$, until possible. Then on each side, we colour one next edge (it may be the same one edge) so that at least one of them is not blue. Afterwards, we continue like for the other values of $\Delta$, depending on the structure of $G_0$.
\begin{itemize}
    \item If $G_0$ has no gadget, or $G_0$ has a gadget but the back edge of $v$ is not pink, then, we just execute the cycle colouring scheme on $C'$. Note, that the cycle colouring scheme does not produce gadgets, so the back edge of $v$ would be the only candidate for one. 
    \item If $G_0$ has a gadget and the back edge $e$ of $v$ is pink, then by the assumption of the procedure, the edge $e$ is not contained in any cycle of length $k$. Hence, if we follow the cycle colouring scheme, then the only gadget created in this step can be $e$. But the gadget in $G_0$ was always incident to a blue edge, and there is no blue edge incident to $v$, therefore we are safe to execute the cycle colouring scheme on $C'$.
\end{itemize}

By the colouring of the two edges on $C'$ incident to $v$, we broke all the automorphisms of $C'$, given that $v$ was fixed. This and the cycle colouring scheme guarantee that all the reached vertices are fixed pointwise, so (A1) is satisfied. Moreover, we used the colour pink only on the cycle $C'$ or on some forward edge of $v$ which does not belong to any cycle of length $k$. This gives us (A2). 

We are left to show that we did not create a second copy of $G_0$ throughout the iterative procedure. Assume otherwise, and denote by $C''$ the cycle isomorphic to the cycle in $G_0$. There must be a pink edge $xy$ contained in a pink path $P''$ of length $|P|$ on $C''$, surrounded by blue and green edges or one blue edge, and the edge $xy$ does not belong to $G_0$. Let us assume that $xy$ is the edge incident to a blue edge on $C''$. Consider the step of the procedure when this edge was coloured. In the procedure, we used the colour pink for an edge in a cycle of length $k$ only when we coloured a cycle $C'$. We used the cycle colouring scheme, where the only possibility to create a pink path of length $|P|$ surrounded by blue and green edges was if $P$ had a gadget. But we ensured that the only pink edges incident to $C'$ lie on $C'$ itself, except for the currently processed vertex $v$ which has no incident blue edges, and therefore we could not have created a gadget of $P''$. We could not have created a cycle of length $k$ with all pink edges except one blue, either, as any pink path of length $k-1$ would be contained in $C'$, and this cycle is induced. This contradiction allows us to conclude that $G_0$ is fixed after the procedure, hence also the whole graph $G$.
\end{proof}

\section{Trees}\label{sec:trees}

\begin{thm} \label{thm:trees}
Let $G$ be a tree with maximum degree $\Delta\geq 3$. Then either $G$ is a symmetric tree, $G$ is a bisymmetric tree, or $D'_l(G)\leq \Delta-1$.
\end{thm}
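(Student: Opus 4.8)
The plan is to pick an automorphism‑invariant root and then colour the edges so that the rooted tree becomes rigid; since the root is invariant, such a colouring is automatically distinguishing. If $\Delta$ is infinite, a connected tree has exactly $\Delta$ edges, so a rainbow colouring works with $\Delta=\Delta-1$ colours, as in the proof of Theorem~\ref{thm:cycle}. Assume $\Delta$ is finite, and — as in the earlier proofs — that the lists are not all identical, since otherwise $G$, being neither a symmetric nor a bisymmetric tree, is covered by Theorems~\ref{thm: P} and~\ref{thm: PS}. When $G$ is finite, root it at its centre, a single vertex $r$ or a single edge $r_1r_2$, which is fixed setwise by $\Aut(G)$. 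When $G$ is infinite there are no symmetric or bisymmetric trees to exclude (these being finite), and instead of an a priori invariant root we choose a small starting subgraph and colour it, using the non‑identical‑lists hypothesis, so that it becomes fixed by every colour‑preserving automorphism, and then grow from it exactly as in the iterative procedure of Theorem~\ref{thm:cycle}.

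Orient every edge away from the root. A colouring is then the same as a labelling $f$ assigning to each non‑root vertex $v$ the colour of its parent edge (with $f(v)$ taken from that edge's list). A root‑fixing automorphism acts non‑trivially exactly when some vertex $u$ has two children $v,v'$ whose colour‑labelled subtrees $T_v$ and $T_{v'}$ are isomorphic as labelled rooted trees. So it suffices to colour so that at every vertex the labelled subtrees below distinct children are pairwise non‑isomorphic and, in the central‑edge case, so that the two labelled halves $T_{r_1}$ and $T_{r_2}$ are non‑isomorphic, which breaks the transposition of $r_1$ and $r_2$. Call such a colouring \emph{separating}.

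For a finite tree I would build a separating colouring from the leaves upward. When processing a vertex $u$ with children $v_1,\dots,v_m$, the subtrees $T_{v_1},\dots,T_{v_m}$ are already coloured, so each $v_i$ carries a ``partial code'', the isomorphism type of the coloured tree $T_{v_i}$, and we must choose $f(v_i)$ from the list of $uv_i$ so that the pairs $(f(v_i),\text{partial code of }v_i)$ are pairwise distinct. This only constrains groups of children with equal partial code, and for a group of size $t$ it amounts to a system of distinct representatives among their lists, which exists by Hall's theorem whenever $t\le\Delta-1$ since each list has size $\Delta-1$. The only failure mode is $t=\Delta$, which forces $u=r$ with $\deg_G(r)=\Delta$ and all $\Delta$ child‑subtrees coloured with a common partial code. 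The crux of the proof is that this cannot happen below a non‑symmetric tree: if the $\Delta$ subtrees are not pairwise isomorphic as uncoloured trees we are done; and if they are all isomorphic to a rooted tree $S$, then $S$ is not a ``full'' symmetric half‑tree (else $G$ would be symmetric), so $S$ has an internal vertex of degree less than $\Delta$ or leaves at different depths, and the resulting slack lets one colour $S$ separatingly in at least two non‑isomorphic ways — colour one child‑subtree one way, the rest the other, and no group of size $\Delta$ arises. The central‑edge case is the same with ``symmetric'' replaced by ``bisymmetric'': each of $r_1,r_2$ has at most $\Delta-1$ children in its half, so each half admits a separating colouring, and unless the two halves are isomorphic full symmetric half‑trees — which is exactly the (excluded) bisymmetric case — we can colour them so as to be non‑isomorphic and then colour $r_1r_2$ arbitrarily.

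For infinite trees the bottom‑up recursion has no base, so I would instead colour top‑down as in the iterative procedure of Theorem~\ref{thm:cycle}: process the reached vertices in order of distance from the starting subgraph, colouring the forward edges of each so that every newly reached vertex becomes fixed by all colour‑preserving automorphisms, and maintaining this as an invariant; since the starting subgraph can be chosen so that no later vertex has more than $\Delta-1$ forward edges, the bad configuration above never arises. It remains to dispose of a few small cases directly — shallow trees such as stars and double stars, and the tight regime $\Delta=3$ where lists have size only $2$ — which in particular yields $D'_l(G)=D'(G)$ for every subcubic tree. The main obstacle I foresee is the characterisation used above: converting ``$G$ is not symmetric (respectively bisymmetric)'' into the clean, list‑uniform statement that a common child‑subtree (respectively a half) admits at least two non‑isomorphic separating colourings, and dovetailing it with the already‑settled case of identical lists.
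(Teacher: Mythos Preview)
Your strategy—root at the centre, colour bottom-up, and invoke Hall's theorem for the forward edges at each vertex—is a genuinely different route from the paper's, which uses neither the centre uniformly nor any SDR argument. The paper instead chooses the root according to the degree structure of $G$: if some vertex has degree strictly between $1$ and $\Delta$, that vertex is taken as root (so it has at most $\Delta-1$ forward edges and your ``failure mode'' cannot occur); otherwise every degree lies in $\{1,\Delta\}$, and the $\Delta$-regular tree and the leaf-containing cases are handled by separate explicit constructions, all of them top-down via what the paper calls a \emph{standard colouring}.

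The obstacle you name is a genuine gap. That a non-full rooted $S$ admits at least two non-isomorphic separating colourings from \emph{every} list assignment of size $\Delta-1$ is not obvious: the slack at a low-degree internal vertex or at leaves of unequal depth does not automatically propagate to the root of $S$ through intermediate vertices each having $\Delta-1$ children, and a straightforward induction on height does not close (changing the colouring of one deep subtree can be undone by a rooted automorphism of $S$ higher up). The paper never needs this lemma, precisely because of its choice of root. Your infinite-tree sketch is also too thin: in the $\Delta$-regular tree no finite subgraph is invariant, and ``use the non-identical-lists hypothesis to fix a starting subgraph'' is the whole difficulty. The paper's Case~3 requires a dedicated scheme—colour all edges at one vertex $r$ with a single colour, guarantee $r$ is the unique such vertex, then fix vertices outward one at a time by placing a marked edge at the current frontier—while Cases~4b/4c exploit the asymmetry between finite and infinite branches hanging off a carefully chosen vertex. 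Your outline does not recover these arguments.
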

\begin{proof}

Like in the proof of Theorem \ref{thm:cycle} we can assume that $\Delta$ is finite. We shall choose one vertex $r$ and refer to it as the root.  We shall use the standard notation and for any vertex $u$, we shall call the incident edge on the unique path from $u$ to $r$ as the \emph{back edge}, and all other edges incident to $u$ as \emph{forward edges}. 

We call a colouring of $(G,r)$ the \emph{standard colouring} if every vertex except $r$ has all the forward edges coloured with distinct colours. We claim that any standard colouring which fixes $N[r]$ (a closed neighbourhood of $r$) is a distinguishing colouring of $G$. To see this, consider any vertex $u$ outside $N[r]$ (as the elements of $N[r]$ are already fixed). Then there is a unique path from $r$ to $u$ through a neighbour $v$ of $r$. Consider the last vertex $w$ on that path, starting from $r$, which is fixed. If $w\neq u$, then some automorphism maps one forward edge of $w$ to another. But this is impossible, since these two edges, by the assumption, have different colours. This means that $w=u$, and $u$ must be fixed.

The remaining of the proof will consist of a few cases where we shall find a suitable root vertex $r$ and a standard colouring of $(G,r)$ with the above property. Note that having the edges incident to $r$ coloured, it is straightforward to find a standard colouring of the graph, e.g. by considering the vertices of $G$ one by one, from those closest to $r$. We shall usually be doing a variation of such procedure, as we shall need some additional properties.

    

\textbf{Case 1.} There is no vertex of degree at least two, with all incident edges sharing the same list.

We choose an arbitrary vertex $r$ and we colour all its incident edges with different colours. Let pink be one of these colours. We colour the second end-vertex $v$ of this pink edge so that $r$ and $v$ have distinct palettes. This is possible, since the edges incident to $v$, by our assumption, have different lists. Finally, we colour the remaining edges of $G$ with a standard colouring without using pink. This is again possible by our assumption.

In the following cases, we shall assume that there is a vertex which is not a leaf, such that all its incident edges share the same palette. 

\textbf{Case 2.} $G$ contains a vertex $v$ such that $1<d(v)<\Delta$.

We take such a vertex as a root $r$ and colour all its incident edges with different colours. Then, we colour the remaining edges to get a standard colouring, with the condition that each vertex of degree $d(r)$, apart from $r$, has a distinct palette than that of $r$. 


\textbf{Case 3.} $G$ is the regular tree of degree $\Delta$.

Let $r$ be an arbitrary vertex of degree at least two, with all incident edges sharing the same list. We start by colouring all the edges incident to $r$ with the same colour, say pink. We shall ensure that $r$ is the only vertex with all incident pink edges. Then, we iteratively fix the remaining vertices. During each iteration, we fix possibly only one vertex, but we choose a colour for multiple edges.  

Let $v$ be a vertex which is not yet fixed, and is the closest to $r$ among all such vertices. Let $i$ be the smallest natural number such that all the currently coloured edges are contained in $B(r,i)$ (i.e. the ball of radius $i$ centred at $r$). We choose a vertex $w$ which is a descendant of $v$ in $B(r,i)\setminus B(r,i-1)$. If there is such a vertex $w$ that has a forward edge with pink on its list, then we pick this vertex and colour that edge pink. Otherwise, we choose any such $w$ and pick an arbitrary colour (say red) for any of its forward edges. Then, we colour all the remaining uncoloured edges in $B(r,i)$ with arbitrary colours, such that:
\begin{itemize}
\item if $w$ has a red forward edge, then the colour red is not used on the forward edges of the vertices in $B(r,i)\setminus B(r,i-1)$, and
\item if $w$ has a pink forward edge, then we do not use the colour pink, and
\item if $w$ has a red forward edge, then each vertex in $B(r,i)$ except $r$ has at most one pink forward edge.
\end{itemize}

After these steps, $w$ is the only vertex in a distance $d(r,w)$ from $r$ with a pink (or red) forward edge. Therefore, $w$ is fixed, and so are all the vertices between $r$ and $w$ (including $v$). Since $\Delta\ge 3$, we did not create vertices with all incident pink edges, apart from $r$. Repeating these steps, we fix all the vertices of $G$. 



\textbf{Case 4.} $G$ is not regular, and the degree of every vertex of $G$ is in $\{1,\Delta\}$. We consider three subcases:

\textbf{Case 4a.} $G$ is finite. Then $G$ contains either a central vertex or a central edge.

If $G$ has a central vertex $r$, then, as $G$ is not a symmetric tree, $G-r$ must contain two rooted subtrees which are not isomorphic. We colour the edges incident to $r$ with distinct colours, except possibly two edges to two non-isomorphic subtrees. For the remaining edges, we use a standard colouring.

If $G$ has a central edge $xy$, we choose an arbitrary colour for that edge. Since $G$ is not a bisymmetric tree, among all the rooted subtrees of $G-e$, there must be two which are non-isomorphic. The roots of these subtrees are either the neighbours of the same end-vertex of the central edge, say $x$, or of two different end-vertices of the central edge. In both cases, we can colour the remaining edges incident to $y$ with different colours, and the same with $x$ (possibly using the same colour on the edges to the non-isomorphic subtrees), so that the palettes of $x$ and $y$ are different. Then we can continue with a standard colouring. 


\textbf{Case 4b.} $G$ contains a ray  but not a double ray.

Let $r$ be any non-leaf vertex on the unique ray of $G$. All but one subtree of $r$ must be finite, since otherwise $G$ would have a double ray. We colour all the edges from $r$ to its finite subtrees with different colours, and we choose any colour, say pink, for the last edge incident to $r$. Then, we complete this colouring to a standard colouring, with the additional condition that any forward edge to an infinite subtree has a colour other than pink. For any considered vertex, there will be at most one such forward edge, so this is possible, and it guarantees that $r$ is fixed.


\textbf{Case 4c.} $G$ contains a double ray.

Since $G$ has a leaf, there exists a vertex $r$ that lies on a double ray and has a finite subtree (and also two infinite ones). We try to choose different colours on the edges incident to $r$, and if it is impossible, then we repeat the colour on the edges to two non-isomorphic subtrees. Note that there is still an edge from $r$ to an infinite subtree with a different colour than the one to the finite subtree. 

Then, we continue with a standard colouring, with the additional condition that if for some vertex $r'$ we are forced to use the same palette as $r$, and there is an automorphism mapping $r'$ to $r$, then we use on the forward edge to the finite subtree a different colour than $r$ has. Note that the back edge of $r'$ leads to the subtree containing $r$, hence, to an infinite one. Therefore, the finite subtree, the existence of which is guaranteed by the automorphism, must be attached to one of the forward edges. 


\end{proof}
\section{Exceptional graphs}

For completeness, we append this short section about the locally finite graphs not covered by Theorems \ref{thm:cycle} and \ref{thm:trees}. We state the following theorems without proofs, as they are straightforward analogues of the proofs for the non-list distinguishing index, see \cite{KP}. 


\begin{thm}
    Let $G$ be the cycle of length $n$. Then $D'_l(G)=D'(G)=3$ if $n=3,4,5$, or $D'_l(G)=D'(G)=2$ otherwise. Moreover, for $n=3,4,5$ the only lists of length 2 which do not yield a distinguishing colouring are the identical ones. 
\end{thm}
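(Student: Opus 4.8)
The plan is to treat the cycle $C_n$ directly, splitting into the case $n \ge 6$ (where two colours suffice) and the small cases $n = 3, 4, 5$ (where three colours are needed and we must also characterise the failing length-$2$ lists). Throughout, recall that $\Aut(C_n)$ is the dihedral group $D_n$, so an edge colouring is distinguishing exactly when it is preserved by no nontrivial rotation and by no reflection.

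For $n \ge 6$, I would fix a cyclic order $e_1, \dots, e_n$ of the edges and argue greedily. Since $D'(C_n) = 2$ for $n \ge 6$ (the non-list result we may cite), the honest task is only to handle the case where the lists are not all equal. The idea is to locate two consecutive edges, say $e_1$ and $e_2$, whose lists differ, pick a colour $a \in L(e_1) \setminus L(e_2)$ and then $b \in L(e_2)$ with $b \ne a$; this already forces any colour-preserving automorphism to fix the oriented edge $e_1$ (no other edge can carry the ordered colour pattern achievable only at that spot), which kills every reflection and every nontrivial rotation provided the rest of the colouring does not accidentally create a second such pattern. One then colours $e_3, \dots, e_n$ one at a time, at each step avoiding the (at most one) colour choice that would complete a rotation- or reflection-symmetry with the already-coloured prefix; since each list has size $2$ and there is at most one forbidden colour at each step, this always succeeds for $n$ large enough, and checking $n = 6$ separately by hand absorbs the small overlap. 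The all-equal-lists subcase is exactly $D'(C_n) = 2$.

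For $n \in \{3,4,5\}$, the claim is $D'_l(C_n) = 3$ together with the statement that the only length-$2$ list assignments with no distinguishing colouring are the constant ones. The lower bound $D'_l \ge D'= 3$ is immediate. For the upper bound, given lists of size $3$, I would again fix two consecutive differing edges (if the lists are all equal, invoke $D'(C_n)=3$), pin down an oriented edge as above using a colour present on one list but not the neighbour's, and then extend; with lists of size $3$ the one or two symmetry-completing colours at each remaining step never exhaust a list, so extension is automatic. For the "moreover" part I would enumerate: suppose all lists have size $2$ and are not all equal, so some two adjacent edges have distinct lists; a short finite case analysis (there are only finitely many patterns of $2$-element lists up to relabelling on $C_3$, $C_4$, $C_5$) shows one can always pick the distinguishing pattern $(a,b,\dots)$ with $a$ unique to one edge and then fill in, so non-distinguishability forces the lists to coincide; and conversely if all lists equal $\{1,2\}$ then every $2$-colouring of $C_3$, $C_4$ or $C_5$ has a nontrivial symmetry, which is a direct check.

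The main obstacle I expect is bookkeeping in the greedy extension for $n \ge 6$: one must verify that avoiding the "bad" colour at each step not only blocks the global rotations/reflections but also never recreates near the end a second copy of the special oriented colour pattern that was used to anchor $e_1$ — in other words, that the anchoring is genuinely rigid. This is the same flavour of argument as avoiding a second copy of $G_0$ in the proof of Theorem \ref{thm:cycle}, and for a single cycle it is elementary, but it is where the care has to go; the small-case enumeration is finite and routine by comparison.
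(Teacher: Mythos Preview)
The paper does not actually prove this theorem: it is stated in the final section with the remark that the proofs are ``straightforward analogues of the proofs for the non-list distinguishing index'' and are omitted. So there is no paper proof to compare your proposal against.

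Your sketch is in the right spirit and matches the paper's philosophy elsewhere: when the lists are all equal, defer to the known non-list value; when they are not, exploit an edge whose list differs from a neighbour's to create an anchor that cannot be replicated, then extend greedily. One point of caution: your claim that during the greedy extension there is ``at most one'' forbidden colour at each step is asserted rather than argued, and at the very last edge $e_n$ one might a priori have to dodge both a rotation-completing colour and a reflection-completing colour simultaneously. With lists of size $2$ that is exactly the scenario that would block you, so the argument needs a line explaining why the anchor colour $a \in L(e_1)\setminus L(e_2)$ genuinely prevents one of these two constraints from ever arising (or, as you suggest, absorbing the smallest $n$ into a direct check). You correctly flag this bookkeeping as the main obstacle; just be aware that ``at most one forbidden colour'' is the crux, not a triviality. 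The small cases $n=3,4,5$ are indeed a finite enumeration and pose no real difficulty.
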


\begin{thm}
    Let $G$ be the double ray, a symmetric tree, a bisymmetric tree, $K_4$, or $K_{3,3}$. Then $D_l'(G)=D'(G)=\Delta(G)$. Moreover, the only lists of length $\Delta-1$ which do not yield a distinguishing colouring are the identical ones, except for bisymmetric trees, where the central edge may have an arbitrary list (and the remaining ones must be identical). 
\end{thm}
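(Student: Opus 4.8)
The plan is to treat the five graphs one at a time, in each case exploiting that both the automorphism group and the combinatorial skeleton are completely explicit. The lower bound $D'_l(G)\ge\Delta(G)$ needs no new argument, since $D'_l(G)\ge D'(G)$ always and the equalities $D'(G)=\Delta(G)$ for these graphs are known from \cite{KP}: for symmetric and bisymmetric trees this is part of the characterisation of the finite trees with $D'=\Delta$, and $D'=\Delta$ for the double ray is classical. So it remains to prove the upper bound $D'_l(G)\le\Delta(G)$ and to identify the $(\Delta-1)$-list assignments admitting no distinguishing colouring.

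For the upper bound I would rerun the proofs of Theorems~\ref{thm:cycle} and~\ref{thm:trees} with one extra colour. Every list has size $\Delta$, which is at least the degree of every vertex, so a system of distinct representatives (SDR) exists at every vertex. For a finite symmetric tree, root at the canonical centre $r$ (fixed by every automorphism), colour the $\Delta$ edges at $r$ with pairwise distinct colours --- this fixes $N[r]$ --- and extend to a standard colouring as in Section~\ref{sec:trees}, which is distinguishing by the observation made there. For a finite bisymmetric tree with canonical central edge $xy$, colour $xy$ arbitrarily, then choose distinct colours on the remaining edges at $x$ and distinct colours on the remaining edges at $y$ so that $x$ and $y$ get different palettes (possible because size-$\Delta$ lists admit at least two SDRs at each of $x,y$); this breaks the only class swap, fixes $x$ and $y$, hence fixes $N[x]\cup N[y]$, and a standard colouring on each side finishes. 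For $K_4$ and $K_{3,3}$, where $\Delta=3$, the distinguishing $3$-edge-colourings of \cite{KP} are reproduced from size-$3$ lists by a direct SDR-based choice. For the double ray, build the colouring edge by edge along $\mathbb{Z}$: read as a bi-infinite word, a colouring is distinguishing precisely when it is aperiodic and is not a palindrome about any centre, and with lists of size $2$ this is easily arranged, e.g.\ by forcing a short asymmetric pattern to occur exactly once.

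For the characterisation of the failing $(\Delta-1)$-list assignments, one direction is immediate: identical lists reduce to the ordinary problem, and $D'(G)=\Delta>\Delta-1$; for a bisymmetric tree the same persists whenever only the central edge deviates, since that edge is fixed by every automorphism and its colour therefore breaks nothing, leaving the ordinary problem on the two identical symmetric halves. The substantive direction is that any other $(\Delta-1)$-list assignment does admit a distinguishing colouring. For $K_4$ and $K_{3,3}$ this is a finite case analysis: assuming two edges carry distinct lists, fix a colour $\gamma$ witnessing the difference, normalise the position of those edges using edge-transitivity, and produce the colouring by hand. For a symmetric tree with centre $r$, a degree-$\Delta$ vertex sees only $\Delta-1$ colours, so two edges $rv_1,rv_2$ must repeat a colour (one can always arrange exactly one such repeat, by an SDR on the other edges at $r$), and the only residual obstruction is the swap of the subtrees at $v_1$ and $v_2$; the remaining subtrees are pinned by the distinct colours on their edges to $r$, and within-subtree symmetries vanish once each subtree receives a standard colouring. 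Since a list of size $\Delta-1$ differing from the common list $L_0$ must contain a colour $\gamma\notin L_0$, one forces $\gamma$ onto the deviating edge, colours the subtree containing it using $\gamma$, colours the twin subtree avoiding $\gamma$, and so destroys the swap. The bisymmetric case is the same argument applied to the two symmetric halves rooted at $x$ and $y$, the central edge inert --- which is exactly why its exception takes the stated form. For the double ray, where the size-$1$ lists force the colouring, one checks directly that the forced bi-infinite word is distinguishing unless it is periodic or a palindrome, so in particular identical lists fail, confirming $D'_l=2=\Delta$.

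I expect the main obstacle to be the substantive direction for symmetric and bisymmetric trees: converting a single, possibly deeply buried, list-inequality into an honest break of the forced subtree (or half) swap while keeping every subtree rigid. One must verify that $\gamma$ can always be placed on the deviating edge without obstructing a standard colouring, that the twin subtree can always be coloured so as to avoid $\gamma$ (or else made non-isomorphic to its partner by a finer colour invariant, when the deviation occurs symmetrically in both twins), and that the choice of which edges at the centre repeat a colour is compatible with all of this --- a bookkeeping argument which, for finite trees, can be pushed through level by level from the centre outward, much as in the iterative procedures of Theorems~\ref{thm:cycle} and~\ref{thm:trees}.
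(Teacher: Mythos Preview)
The paper gives no proof of this statement at all; it merely cites \cite{KP} and asserts that the arguments are straightforward analogues of the non-list case. Your sketch is therefore already more detailed than what the paper offers, and its overall shape --- adapt the known $D'$ arguments, exploiting an SDR at each vertex for the upper bound, and route a single list-deviation to break the one residual symmetry for the ``moreover'' clause --- is exactly the intended approach.

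There is, however, a genuine gap in your treatment of the double ray, and it exposes an imprecision in the statement itself. With $\Delta-1=1$ the lists force the colouring, and you correctly note that the forced bi-infinite word is distinguishing iff it is aperiodic and non-palindromic. But you then conclude only that identical lists fail, without verifying the converse the ``moreover'' clause actually asserts: that \emph{every} non-identical size-$1$ assignment succeeds. That converse is false as written --- the alternating assignment $\ldots,\{a\},\{b\},\{a\},\{b\},\ldots$ is non-identical yet forces a colouring preserved by the shift by $2$. So either the ``moreover'' clause must be read as not covering the double ray, or the statement needs amending; your write-up should flag this rather than gloss over it.

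For the trees your outline is sound, but one bookkeeping point deserves care: you need the freedom to choose \emph{which} two edges at the centre repeat a colour, so that the deviating list (or the subtree containing it) lands in one of the two swapped branches. When all lists at $r$ equal $L_0$ this freedom is available; when the deviation already occurs on some edge at $r$, you can instead use a colour outside $L_0$ there to make all $\Delta$ colours at $r$ distinct and finish immediately with a standard colouring.
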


\bibliographystyle{abbrv}
\bibliography{lit.bib}

\end{document}